\chardef\bslash=`\\ % p.  424, TeXbook %\newcommand{\ntt}{\seriesm\shape n\tt}
\def\verbatim{\interlinepenalty\@M \@verbatim
\leftskip\@totalleftmargin\advance\leftskip2pc
\frenchspacing\@vobeyspaces \@xverbatim} \makeatother \hfuzz1pc
\def\dgt@k{\dg@DX=-3 \dg@DY=2 \dg@SIZE=3}
\def\dgt@kk{\dg@DX=3 \dg@DY=-1 \dg@SIZE=3}% \makeatother
\theoremstyle{plain} \newtheorem{thm}{Theorem}[section]
\theoremstyle{definition}
\begin{document}

\title[Strong topology on the set of persistence diagrams]
{Strong topology on the set of persistence diagrams}
\author{Volodymyr Kiosak}  
\address{Institute of Engineering
Odessa State Academy of Civil Engineering and Architecture
Didrihson  st.,   4,  Odessa,  65029, Ukraine}
\email{kiosakv@ukr.net}

\author{Aleksandr Savchenko}
\address{Kherson State Agrarian University, Stretenska  st.,  23,  Kherson, 73006, Ukraine}
\email{savchenko.o.g@ukr.net}

\author{Mykhailo Zarichnyi}
\address{Faculty of Mathematics and Natural Sciences,
University of Rzesz\'ow, 1 Prof. St. Pigo\'n Street
35-310, Rze\-sz\'ow, Poland}
\email{zarichnyi@yahoo.com}
\thanks{}
\subjclass[2010]{55N35, 54E35} % \keywords{Rotor,hyperspace,Hilbert cube}
\keywords{persistence diagram, bottleneck distance}
%\date{}
%\dedicatory{}
%\commby{}

%%% ----------------------------------------------------------------------

\begin{abstract} We endow the set of persistence diagrams with the strong topology (the topology of countable direct limit of increasing sequence of bounded subsets considered in the bottleneck distance). The topology of the obtained space is described.

Also, we prove that the space of persistence diagrams with the bottleneck metric has infinite asymptotic dimension in the sense of Gromov.
\end{abstract}

%%% ----------------------------------------------------------------------
\maketitle
%%% ----------------------------------------------------------------------
\section{Introduction}
Topological Data Analysis (TDA) is a field in applied mathematics concentrated around investigation of big data by topological methods. Imposing metric structures in the data set allows for applying techniques from algebraic topology. In this way, the notion of persistent homology was introduced \cite{Ca}.

The persistent homology plays an important role in TDA. The persistence diagrams are used to characterize persistent homology and thus to describe geometric properties of data. The set of all persistence diagrams can be endowed with different metrics. The most known are the Wasserstein metric and bottleneck metric.

The spaces of persistence diagrams are object of considerations in numerous publications (see, e.g., \cite{MMH,CCO,LOC,PMK,TS}). In particular, in \cite{PMK} a characterization theorem for compact subsets in the space of persistence diagrams is proved.

It is proved in \cite{BLPY} that the space of persistent diagrams is of infinite asymptotic dimension in the sense of Gromov.  This concerns the  Wasserstein metric on the set of persistence diagrams. Answering a question from \cite{BLPY} we prove an analogous result for the bottleneck metric on this set.

As we remark below, the set of all persistence diagrams is nothing but the infinite symmetric power of the upper (positive) half-plane. In this note we consider the strong (direct limit) topology on this set. One of our results is that the space of the persistence diagrams with this topology is homeomorphic to the countable direct limit of the euclidean spaces.

\section{Preliminaries}

\subsection{Persistence diagrams} Let $$\Delta=\{(x,y)\in\mathbb R^2_+\mid x= y\},\ \hat X=\{(x,y)\in\mathbb R^2_+\mid x\le y\},$$ and $X=\hat X\setminus\Delta$. For any $n\in\mathbb N$, let $\hat X_n=\{(x,y)\in \hat X\mid y\le n\}$, $X_n=\hat X_n\setminus\Delta$.

A {\em persistence diagram} is a function $\mu\colon X\to \mathbb Z_+$ such that $\mu(a)=0$ for all but finitely many $a\in \hat X$ and $\mu(a)=0$ for all $a\in\Delta$. The {\em support} of $\mu$ is the set $\mathrm{supp}(\mu)=\{a\in \hat X\mid \mu(a)>0\}$.

By $\mathcal D$ we denote the set of all persistence diagrams. Given $n\in\mathbb N$, we denote by $\mathcal D_n$ the set of all $\mu\in\mathcal D$ such that $|\mathrm{supp}(\mu)|\le n$.

\subsection{Bottleneck distance} Let $\mu\in \mathcal D$. A {\em sequential representation} of $\mu$ is a finite sequence $(a_1,\dots,a_k)$ such that the following are satisfied:
\begin{enumerate}
\item for every $a\in \mathrm{supp}(\mu)$, $|\{i\le m\mid a=a_i\}|=\mu(a)$;
\item if $a_i\notin \mathrm{supp}(\mu)$, then $a_i\in\Delta$.
\end{enumerate}
The number $k$ is said to be the {\em length} of the representation  $(a_1,\dots,a_k)$.

By $S_k$, the group of permutations of the set $\{1,\dots,k\}$ is denoted.

Let $\mu,\nu\in\mathcal D$. We define
\begin{align*} d(\mu,\nu)=&\inf\{\min\{\max\{\rho(a_i,b_{\sigma(i)})\mid 1\le i\le k\}\mid \sigma\in S_k\} \\ &\mid (a_1,\dots,a_k),\  (b_1,\dots,b_k) \text{ are sequential representations of }\\ &\mu\text{ and }\nu\text{ respectively},\ k\in\mathbb N\}.\end{align*}

(The assignment $a_i\mapsto b_{\sigma(i)}$, $i=1,\dots,k$, is said to be a {\em matching} (see, e.g., \cite{CSGO} for details).

The function $d$ is known to be a metric on $\mathcal D$ (the  bottleneck metric; see, e.g., \cite{CSGO}).

\subsection{Space $\mathbb R^\infty$} Recall that the direct limit of the increasing sequence of topological spaces $X_1\subset X_2\subset \dots$ (here $X_n$ is a subspace of $X_{n+1}$, for each $n$) is the set $X=\cup_{n=1}^\infty X_n$ endowed with the strongest topology inducing the original topology on each $X_n$. The obtained topological space is denoted by $\varinjlim X_n$.

We identify every $(x_1,\dots,x_n)\in \mathbb R^n$ with $(x_1,\dots,x_n,0)\in \mathbb R^{n+1}$. Thus, $\mathbb R^n$ is regarded as a subspace in $\mathbb R^{n+1}$. We denote by $\mathbb R^\infty$ the direct limit of the sequence $\mathbb R\subset \mathbb R^2\subset \mathbb R^3\subset\dots$.

A characterization theorem for the space $\mathbb R^\infty$ is proved by K. Sakai \cite{Sa}.

\begin{thm}[Characterization Theorem for $\mathbb R^\infty$]\label{t:sakai} Let $X$ be a countable direct limit of finite-dimensional compact metrizable spaces. The following are equivalent.
\begin{enumerate}
\item $X$ is homeomorphic to $\mathbb R^\infty$;
\item for every finite-dimensional compact metrizable pair $(A,B)$ and every embedding $f\colon B\to X$ there exists an embedding $\bar f\colon A\to X$ that extends $f$.
\end{enumerate}
\end{thm}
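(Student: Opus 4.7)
The plan is to prove both implications using the standard toolkit of infinite-dimensional topology adapted to countable direct-limit spaces.

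For $(1)\Rightarrow(2)$, I would reduce to verifying the extension property directly for $\mathbb{R}^\infty$. Given a finite-dimensional compact metrizable pair $(A,B)$ and an embedding $f\colon B\to\mathbb{R}^\infty$, compactness of $B$ together with the definition of the direct-limit topology forces $f(B)\subset\mathbb{R}^m$ for some $m$. Using finite-dimensionality of $A$, pick an auxiliary embedding $g\colon A\to\mathbb{R}^N$. Inside a sufficiently large $\mathbb{R}^M\supset\mathbb{R}^m\cup\mathbb{R}^N$, the two embeddings $f$ and $g|_B$ of the same compactum $B$ sit in high codimension, so a general-position/isotopy-extension argument produces a homeomorphism $h$ of $\mathbb{R}^M$ with $h\circ g|_B=f$. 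Then $h\circ g\colon A\to\mathbb{R}^\infty$ is the required extension.

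For $(2)\Rightarrow(1)$, the approach is a back-and-forth construction. Write $X=\bigcup_n X_n$ with the $X_n$ compact finite-dimensional metrizable, and $\mathbb{R}^\infty=\bigcup_n\mathbb{R}^n$. I would alternately build embeddings $u_n\colon X_n\to\mathbb{R}^\infty$ and $v_n\colon\mathbb{R}^n\to X$ so that each newly built map extends the previous one (under the identification induced by the already-matched pieces), and so that the images $u_n(X_n)$ and $v_n(\mathbb{R}^n)$ exhaust $\mathbb{R}^\infty$ and $X$ respectively. On the ``forth'' steps I would invoke the extension property assumed for $X$ to extend an embedding $\mathbb{R}^n\to X$ across the inclusion $\mathbb{R}^n\hookrightarrow\mathbb{R}^{n+1}$; on the ``back'' steps I would invoke the extension property for $\mathbb{R}^\infty$ established above to extend an embedding $X_n\to\mathbb{R}^\infty$ across $X_n\hookrightarrow X_{n+1}$. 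The compatible families assemble to mutually inverse bijections $X\leftrightarrow\mathbb{R}^\infty$, and continuity comes for free from the direct-limit topology, since a map out of a direct limit is continuous as soon as its restrictions to the stages are.

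The main obstacle, I expect, is the bookkeeping in the back-and-forth: the successive extensions must not only agree with previously matched data but also guarantee that every $X_n$ and every $\mathbb{R}^n$ is eventually covered by the range, in order to force bijectivity in the limit. The cleanest way to manage this is to feed the extension property, at each stage, a pair consisting of the current partial matching and a designated ``next target'' on the opposite side, alternating sides along a prearranged diagonal enumeration of the two exhausting sequences.
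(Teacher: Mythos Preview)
The paper does not give its own proof of this statement: Theorem~\ref{t:sakai} is quoted from Sakai's paper \cite{Sa} and used as a black box in the proof of Theorem~\ref{t:infty}. So there is no in-paper argument to compare against; what remains is to assess whether your sketch is a viable route to Sakai's result.

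Your overall architecture---verify the extension property for $\mathbb{R}^\infty$ directly, then run a back-and-forth to build a homeomorphism---is the standard one and is sound in outline. There is, however, a concrete gap in the $(2)\Rightarrow(1)$ direction. You exhaust $\mathbb{R}^\infty$ by the subspaces $\mathbb{R}^n$ and propose to invoke hypothesis~(2) to extend an embedding $\mathbb{R}^n\to X$ across the inclusion $\mathbb{R}^n\hookrightarrow\mathbb{R}^{n+1}$. But hypothesis~(2) applies only to \emph{compact} pairs $(A,B)$, and $(\mathbb{R}^{n+1},\mathbb{R}^n)$ is not one. The repair is easy---run the back-and-forth against a compact exhaustion of $\mathbb{R}^\infty$, say the cubes $[-k,k]^k$, which gives the same direct limit---but as written the extension step on the $X$-side cannot be invoked. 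You will also need, at each stage, to enlarge the pair so that its image covers the next $X_n$ (respectively the next cube), not merely to extend; this is what guarantees surjectivity of the limit maps, and your final paragraph acknowledges the issue without quite saying how the enlargement is fed into the extension hypothesis.

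A smaller remark on $(1)\Rightarrow(2)$: the isotopy-extension step needs a codimension hypothesis (two embeddings of a $k$-dimensional compactum into $\mathbb{R}^M$ are ambiently equivalent once $M$ is large relative to $k$), which you should make explicit. Alternatively, one avoids the isotopy machinery entirely: extend $f$ to a continuous $\tilde f\colon A\to\mathbb{R}^m$ by Tietze, embed $A$ into fresh coordinates $\mathbb{R}^k$ via some $\varphi$, pick a Urysohn function $\lambda\colon A\to[0,1]$ vanishing exactly on $B$, and set $\bar f(a)=(\tilde f(a),\lambda(a),\lambda(a)\varphi(a))\in\mathbb{R}^{m+1+k}\subset\mathbb{R}^\infty$.
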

\subsection{Asymptotic dimension} Let $Y$ be a metric space. A family $\mathcal A$ of subsets of $X$ is said to be {\em uniformly bounded} if
$\sup\{\mathrm{diam}(A)\mid A\in\mathcal A\}<\infty$. Given $D>0$, we say that a family $\mathcal A$ is {\em $D$-disjoint} if, for every distinct $A,B\in\mathcal A$, $\mathrm{dist}(A,B)\ge D$

We say that the asymptotic dimension of $Y$ does not exceed $n$, if, for any $D>0$, there exists a uniformly bounded cover $\mathcal U$ of $Y$ such that $\mathcal U=\cup_{i=0}^n\mathcal U_i$, where every family $\mathcal U_i$ is $D$-disjoint, $i=0,\dots,n$. The notion of asymptotic dimension is defined by M. Gromov \cite{Gr}. See, e.g., \cite{BD} for properties of the asymptotic dimension.

\section{Main results}

\subsection{Strong topology on the space of persistence diagrams}
For every $n\in\mathbb N$, let $$\mathcal D_n=\{\mu\in \mathcal D\mid |\mathrm{supp}(\mu)|\le n\text{ and }\mathrm{supp}(\mu)\subset X_n\}$$ and $\mathcal D_\infty=\varinjlim\mathcal D_n$.

\begin{thm}\label{t:infty} The space $\mathcal D_\infty$ is homeomorphic to $\mathbb R^\infty$.
\end{thm}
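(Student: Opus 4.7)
The natural route is via Sakai's characterization theorem (Theorem~\ref{t:sakai}). Two things must be checked: that $\mathcal D_\infty$ is a countable direct limit of finite-dimensional compact metrizable spaces, and that the embedding-extension property~(2) holds.

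For the direct-limit hypothesis, I would realize each $\mathcal D_n$ as a quotient of the compact space $(\hat X_n)^n$ via the map $q_n\colon (a_1,\dots,a_n)\mapsto \mu$ where $\mu(a)=|\{i:a_i=a\notin\Delta\}|$. Continuity of $q_n$ in the bottleneck metric and surjectivity are straightforward, and since $(\hat X_n)^n$ is compact while $\mathcal D_n$ is Hausdorff, $q_n$ is a quotient map. Hence $\mathcal D_n$ is a compact metrizable space of dimension at most $2n$. One then checks that $\mathcal D_n$ is closed in $\mathcal D_{n+1}$ and that the bottleneck topology is compatible with the chain of inclusions, so $\mathcal D_\infty$ satisfies the hypothesis of Theorem~\ref{t:sakai}.

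For property~(2), let $(A,B)$ be a finite-dimensional compact metrizable pair and $f\colon B\to\mathcal D_\infty$ an embedding; by compactness $f(B)\subset\mathcal D_N$ for some $N$. My plan is to:
\begin{enumerate}
\item Extend $f$ to a continuous $\tilde f\colon A\to\mathcal D_N$, using that $\mathcal D_N$ is an ANR (a consequence of the quotient description above).
\item Choose, independently, an embedding $g\colon A\to\mathcal D_M$ for some $M$ whose values have supports inside $\{(x,y):x,y>N+1\}$; such $g$ exists because any finite-dimensional compactum embeds into a suitable product of copies of $X_m$, which projects symmetrically into $\mathcal D_M$.
\item Pick a continuous $r\colon A\to[0,1]$ with $r^{-1}(0)=B$, e.g.\ normalized distance to $B$.
\item Set $\bar f(a)=\tilde f(a)+g_{r(a)}(a)$, where $g_t(a)$ is obtained from $g(a)$ by moving each support point $(x,y)$ along the perpendicular segment to its nearest diagonal point $((x+y)/2,(x+y)/2)$, parametrized so that $g_1=g$ and $g_0$ yields the empty diagram.
\end{enumerate}

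On $B$ we have $r\equiv 0$, hence $\bar f(a)=\tilde f(a)=f(a)$. By construction, $\tilde f(a)$ has support in $\{y\le N\}$ while $g_{r(a)}(a)$ has support in $\{y>N+1\}$, so the two summands of $\bar f(a)$ can be separated, which gives injectivity: the ``high'' part recovers $g_{r(a)}(a)$ and hence $a$ (from injectivity of $g$ and of the scaling), while a hypothetical coincidence $\bar f(a)=\bar f(a')$ with $a\in B$ and $a'\notin B$ is ruled out because only $\bar f(a')$ has support above $y=N+1$. Continuity is routine, and compactness of $A$ promotes the injective continuous $\bar f$ to an embedding. The main obstacle is step~(i): producing the continuous extension $\tilde f$, which relies on $\mathcal D_N$ being an ANR. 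I would obtain this by stratifying $\mathcal D_N$ into finitely many closed pieces, each a permutation quotient of a polyhedral product, and assembling the local retractions; the delicate strata are those where support points approach the diagonal.
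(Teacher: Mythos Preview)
Your overall strategy coincides with the paper's: both invoke Sakai's characterization, extend $f$ to all of $A$ using that $\mathcal D_N$ is an absolute retract, and then adjoin extra points living in a region disjoint from $X_N$ to force injectivity. On step (i), the ANR property is not the delicate point you make it out to be. The paper observes that $\hat X_n/(\hat X_n\cap\Delta)$ is a $2$-disc (a triangle with one edge collapsed), so $\mathcal D_n\cong SP^n(\text{disc})$ is a finite-dimensional AR by known results on symmetric products of manifolds; your stratification programme is unnecessary.

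The genuine gap is in step (iv). You assert that the high part $g_{r(a)}(a)$ determines $a$ ``from injectivity of $g$ and of the scaling,'' but the joint map $(t,\mu)\mapsto\mu_t$ is \emph{not} injective on $(0,1]\times\mathcal D_M$: for instance $\{(N+2,N+4)\}$ scaled by $\tfrac12$ equals $\{(N+\tfrac52,N+\tfrac72)\}$ scaled by $1$. Thus one can have $a\ne a'$ in $A\setminus B$ with $g_{r(a)}(a)=g_{r(a')}(a')$, and since $\tilde f$ is merely a continuous extension it need not separate them either. A minimal repair is to include a fixed marker point in every $g(a)$, so that its scaled position reveals $r(a)$; after unscaling one recovers $g(a)$ and hence $a$. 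The paper sidesteps the issue by embedding the \emph{quotient} $A/B$ (rather than $A$) into $[0,1]^m$ via some $i$ with $i(\alpha(B))=0$, and appending to the extension $g(x)$ points at prescribed distinct first coordinates whose heights above the diagonal encode $i_1(\alpha(x)),\dots,i_m(\alpha(x))$; these lie on $\Delta$ exactly when $x\in B$, and on $A\setminus B$ one reads off $i\circ\alpha$, which is injective there. Your step (ii) also needs the remark that the factor intervals should land in pairwise disjoint vertical strips of $X$, since otherwise the symmetric-power projection is not injective on the image and ``projects symmetrically into $\mathcal D_M$'' does not produce an embedding.
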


\begin{proof} For any $n\in \mathbb N$, define a map $\xi_n\colon \hat X^n_n\to\mathcal D_n$ as follows: $\xi_n(a_1,\dots,a_n)(a)=|\{i\mid a=a_i\}|$, for all $a\in X_n$. Note that this map is clearly continuous and it admits a factorization $\xi_n=\xi'_n\xi''_n$, where $\xi''\colon \hat X^n_n\to (\hat X_n/(\hat X_n\cap \Delta)_n)^n$ is the factorization map, where $*_n$ stands for $\hat X_n\cap \Delta$ (actually, $\xi''_n=q^n$, where $q\colon \hat X^n\to \hat X_n/(\hat X_n\cap \Delta)_n$ is the factorization map). Therefore, $\mathcal D_n$ is the orbit space of the action of the group $S_n$ on the space $(\hat X_n/(\hat X_n\cap \Delta))^n$ by permutation of coordinates. In other words, $\mathcal D_n$ is homeomorphic to the $n$th symmetric power $SP^n(\hat X_n/(\hat X_n\cap \Delta))$. The orbit containing $(x_1,\dots,x_n)$ will be denoted by $[x_1,\dots,x_n]$.

We denote by $*_n$ the point $q(\hat X_n\cap \Delta)\in \hat X_n/(\hat X_n\cap \Delta)$. Identifying $*_n$ with $*_{n+1}$, one can consider $\hat X_n/(\hat X_n\cap \Delta)$ as a subset of $\hat X_{n+1}/(\hat X_{n+1}\cap \Delta)$. Then identifying $[x_1,\dots,x_n]\in SP^n(\hat X_n/(\hat X_n\cap \Delta))$ with $[x_1,\dots,x_n,*_{n+1}]\in SP^{n+1}(\hat X_{n+1}/(\hat X_{n+1}\cap \Delta))$ we finally obtain that $\mathcal D_\infty$ is homeomorphic to the space $$\varinjlim  SP^n(\hat X_n/(\hat X_n\cap \Delta))=SP^\infty (\varinjlim \hat X_n/(\hat X_n\cap \Delta).$$
The latter space is known as the infinite symmetric power construction \cite{DT}.

For every $n\in \mathbb N$, the space $\hat X_n/(\hat X_n\cap \Delta)$ is homeomorphic to the 2-dimensional disc. Therefore, the space $SP^n(\hat X_n/(\hat X_n\cap \Delta))$ is a finite dimensional absolute retract (AR), see \cite{Wa}.
Similarly as in \cite{Za} one can prove that the space $SP^\infty (\varinjlim \hat X_n/(\hat X_n\cap \Delta)$ (and also the space $\mathcal D_\infty$) is homeomorphic to $\mathbb R^\infty$.

For the sake of completeness, we provide here a proof based on Sakai's Characterization Theorem \ref{t:sakai}. Let $(A,B)$ be a finite-dimensional compact metrizable pair an let $f\colon B\to\mathcal D_\infty$ be an embedding. Then there exists $n\in\mathbb N$ such that $f(B)\subset\mathcal D_n$. As we already remarked, $\mathcal D_n$ is an absolute retract. Thus, there exists an extension $g\colon A\to\mathcal D_n$ of $f$.

We denote by $\alpha\colon A\to A/B$ the quotient map. Since the quotient space $A/B$ is finite-dimensional, there exists an embedding $i\colon A/B\to [0,1]^m$, for some $m$. Without loss of generality we assume that $\alpha(A/B)=0$, $\alpha(A\setminus B)\subset (0,1)^m$, and $m>n$. Write $i(a)=(i_1(a),\dots,i_m(a))$.

Now, given $x\in A$, write $g(x)$ as $[g_1(x), \dots, g_n(x)]$. Then define \begin{align*}\bar f(x)=&[g_1(x), \dots, g_n(x),\\ & (1,1+i_1(\alpha(x))),\dots, (2n+1,2n+1+i_1(\alpha(x))),\\ & (2n+2, 2n+2+i_2(\alpha(x)),\dots, (4n+3,4n+3+i_2(\alpha(x))),\\ &\dots \\
 &((m-1)(2n+1)+1,(m-1)(2n+1)+1+i_m(\alpha(x))),\dots, \\ & ( m(2n+1)+1,m(2n+1)+1+i_m(\alpha(x)))].\end{align*}
First, note that $\bar f$ is well defined and continuous. If $x\in B$, then $i_k(\alpha(x))
=0$, $k=1,\dots,n(2n+1)$, and therefore  $$\bar f(x)=[g_1(x), \dots, g_n(x), (1,1),\dots, (n(2n+1),n(2n+1)))]=f(x),$$ because of our identifications.

Clearly, $\bar f$ is well-defined and continuous. Since $A$ is compact, in order to prove that $\bar f$ is an embedding it is sufficient to prove that $\bar f$ is injective. Let $x,y\in A$, $x\neq y$. If $x\in B$ and $y\in A\setminus B$, then $|\mathrm{supp}(\bar f(x))|\le n$ and, since $i_k(\alpha(y))\neq0$, for some $k\le m$, we conclude that $|\mathrm{supp}(\bar f(y))|\ge n+1$. If $x,y\in A\setminus B$, then there exists $k\le m$, such that
$i_k(\alpha(x))\neq i_k(\alpha(y))$. Since $|\mathrm{supp}(g(x))|\le n$, we see that there exists $j\le n+1$ such that $$(j(2n+1)+1,j(2n+1)+1+ i_j(\alpha(x))\in\mathrm{supp}(\bar f(x))\setminus \mathrm{supp}(\bar f(y))$$ and therefore $\bar f(x)\neq\bar f(y)$. The other cases being treated similarly, this proves injectivity of $\bar f$. By Theorem \ref{t:sakai}, $\mathcal D_\infty$ is homeomorphic to $\mathbb R^\infty$.

\end{proof}

\subsection{Asymptotic dimension of the space of persistence diagrams} We consider the set $\mathcal D$ of all persistence diagrams with the bottleneck metric. Given $n\in\mathbb N$, consider the set \begin{align*}K_n=&\{[(n,n+(n+1)+t_1),(2n+1, 2n+1+(n+1)+t_2),\dots,\\&(n-1)n+(n-2), (n-1)n+(n-2)+(n+1)+t_{n-1})], \\& (n^2+(n-1),n^2+(n-1)+(n+1)+t_n )]\mid (t_1,\dots,t_n)\in[0,n]^n\}\end{align*} Clearly, $K_n$ is isometric to the cube $[0,n]^n$ endowed with the $l_\infty$-metric. This easily follows from the observation that every matching between two points from $K_n$ realizing the bottleneck distance consists of pairs of points from $X$ so that each pair lies on a vertical line.

Since $n$ can be chosen arbitrarily large, the known properties of the asymptotic dimension (see, e.g., \cite{BD}) imply the following.

\begin{thm} $\mathrm{asdim}\, \mathcal D=\infty$.
\end{thm}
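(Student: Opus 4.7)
The plan is to argue by contradiction, exploiting the isometric embeddings $K_n\cong[0,n]^n_{\ell_\infty}\hookrightarrow \mathcal D$ constructed above. Suppose $\mathrm{asdim}\,\mathcal D = k <\infty$. Fixing $D=1$, the definition of asymptotic dimension supplies some constant $R>0$ and a cover $\mathcal U_0\cup\cdots\cup\mathcal U_k$ of $\mathcal D$ in which each $\mathcal U_i$ is $1$-disjoint and every element has diameter at most $R$. Restricting this cover to $K_n$ yields, for every $n$, a cover of the $\ell_\infty$-cube $[0,n]^n$ by $k+1$ pairwise $1$-disjoint families of uniformly bounded sets (diameter $\leq R$); in particular, the restricted cover has multiplicity $\leq k+1$.

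The next step is to observe that such a cover cannot exist once $n$ is large compared to the fixed $R$. This is where I would invoke the standard principle from the Bell--Dranishnikov toolkit \cite{BD}: the asymptotic dimension of $\mathbb R^n$ with any $\ell_p$-metric is exactly $n$, and consequently a metric space admitting isometric copies of arbitrarily high-dimensional $\ell_\infty$-cubes (with side length unbounded in terms of the covering parameters) has infinite asymptotic dimension. Concretely, once $n>k$ and $n/R$ exceeds the Lebesgue constant of the $n$-cube, one can derive a contradiction in either of two standard ways: via a Lebesgue covering-dimension argument (the topological dimension of $[0,n]^n$ being $n$ forces any sufficiently fine cover to have multiplicity $\geq n+1 > k+1$), or via a rescaling/ultralimit argument in which the uniformity of $R$ across all of the cubes would descend to a cover of $\mathbb R^n$ by only $n$ uniformly bounded $1$-disjoint families, contradicting Dranishnikov's theorem.

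The main obstacle I anticipate is exactly this last step: each individual $K_n$ is bounded, hence has asymptotic dimension $0$ on its own, so the contradiction must come not from any single cube but from the uniformity of the parameter $R$ (which depends only on $D$ and $k$, not on $n$) across the entire family $\{K_n\}$. If needed, one can reinforce the argument by slightly modifying the construction of $K_n$ — enlarging the horizontal gaps and the vertical offset $n+1$ so that the parameters $t_i$ range over $[0,L]$ for arbitrary $L$ — producing genuine isometric copies of $[0,L]^n_{\ell_\infty}$ for every $L$ and thereby trivializing the Lebesgue/ultralimit step. Either way, the conclusion is that $\mathrm{asdim}\,\mathcal D > k$ for every finite $k$, so $\mathrm{asdim}\,\mathcal D=\infty$.
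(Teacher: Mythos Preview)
Your argument is correct and follows the same line as the paper: the paper constructs the isometric copies $K_n\cong[0,n]^n_{\ell_\infty}$ inside $\mathcal D$ and then simply appeals to ``the known properties of the asymptotic dimension (see, e.g., \cite{BD})'' to conclude, whereas you have spelled out exactly how that appeal goes (uniform bound $R$ independent of $n$, restriction to $K_n$, and a Lebesgue covering/multiplicity contradiction once $n>\max\{k,R\}$). In other words, your proposal is the paper's one-line deduction made explicit.
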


\section{Remarks}

We conjecture that an analog of Theorem \ref{t:infty} can be proved for the space $\mathcal D_\infty=\varinjlim \mathcal D_n$ in the case when every $\mathcal D_n$ is endowed with the Wasserstein metric (also Wasserstein $p,q$ metric considered in \cite{BLPY}).

Since many spaces of persistence diagrams are infinitely-dimensional, one can expect that the methods of infinite-dimensional topology, in particular, the theory of infinite-dimensional manifolds, will be useful in their investigations.

In \cite{CCO}, the space $\mathcal D_N^b$ of bounded persistent diagrams with less than $N$ points is mentioned. We consider the space $\tilde{\mathcal D}_N$ of exactly $N$ points (taking into account the multiplicities), $N\in\mathbb N$. Having in mind the mentioned identification of persistence diagrams and symmetric powers one can derive from \cite[Theorem 4.5]{Wa} that the space $\tilde{\mathcal D}_N$ is homeomorphic to the euclidean space $\mathbb R^{2N}$. This leads to the question of description of topology of the subspace $\tilde{\mathcal D}_{\le N}=\cup_{i\le N}\tilde{\mathcal D}_{i}$ of $\mathcal D$.

In this note we restricted ourselves with persistence diagrams of finite support. In some publications, persistence diagrams with countably many points are also considered. In particular, it is known that the latter spaces are complete in the Wasserstein metric. In the subsequent publications we are going to consider the geometry of the complete spaces of persistence diagrams and some of their subspaces.

\end{document}